\documentclass[10pt, reqno]{amsart}

% declaration des packages

\usepackage{geometry}            % page layout
\geometry{reset, a4paper, left=0.8cm, right=0.8cm, top=1.5cm, bottom=1.5cm}
\usepackage{amsfonts, amssymb, amsmath, amsthm, amscd}
\usepackage{mathrsfs}
\usepackage{xcolor}
\usepackage[colorlinks=true, linkcolor=red, citecolor=red, urlcolor=blue]{hyperref}
\usepackage{pgf, pgfkeys, tikz}
\usetikzlibrary{arrows}
\usepackage{stackrel}
\usepackage{caption}
\usepackage{units}
\usepackage[T1]{fontenc}
\usepackage[english]{babel}
\usepackage{tabularx}
\usepackage{float}

\setlength{\tabcolsep}{3pt}
\setlength{\parindent}{0pt}
\setlength{\parskip}{0.5em}

\tolerance=1
\emergencystretch=\maxdimen
\hyphenpenalty=10000
\hbadness=10000

% environnements

\theoremstyle{plain}
\newtheorem{theorem}{Theorem}[section]
\newtheorem{definition}[theorem]{Definition}
\newtheorem{proposition}[theorem]{Proposition}
\newtheorem{lemma}[theorem]{Lemma}

\newtheorem*{definition*}{Definition}
\newtheorem*{theorem*}{Theorem}
\newtheorem*{proposition*}{Proposition}
\newtheorem*{lemma*}{Lemma}
\newtheorem*{corollary*}{Corollary}
\newtheorem*{remark*}{Remark}
\newtheorem*{thanks*}{Acknowledgements}

\numberwithin{equation}{section}

% abréviations

                % ensembles

\def \bb {\mathbb}

\def \dps {\displaystyle}            % séparateurs
\def \( {\left(}
\def \) {\right)}
\def \[ {\left\lbrack}
\def \] {\right\rbrack}
\def \< {\left\langle}
\def \> {\right\rangle}

\renewcommand{\leq}{\leqslant}            % symboles inégalités
\renewcommand{\geq}{\geqslant}
\newcommand \eqn[1]{\; #1 \;}

% macro \mathcolor

\makeatletter
\def\mathcolor#1#{\@mathcolor{#1}}
\def\@mathcolor#1#2#3{%
    \protect\leavevmode
    \begingroup
        \color#1{#2}#3%
    \endgroup
}
\makeatother

% bibliographie

\makeatletter
\def \thebibliography#1 {\section*{References} \list
                                  {[\arabic{enumi}]}{\settowidth \labelwidth{[#1]} \leftmargin \labelwidth
                                   \advance \leftmargin \labelsep
                                   \usecounter{enumi}}
                                   \def \newblock{\hskip .11em plus .33em minus .07em}
                                   \sloppy \clubpenalty4000 \widowpenalty4000
                                   \sfcode`\. = 1000 \relax}

\makeatother

% #Main############################################################################## %

\begin{document}

\title[]{On the elegance of Ramanujan's series for $\dfrac{1}{\pi}$}
\author[]{Chieh-Lei Wong}
\address[]{}
\email[]{\href{mailto:shell_intheghost@hotmail.com}{shell\_intheghost@hotmail.com}.}
\date{\today}
\keywords{Number theory, elliptic integrals, Ramanujan's class invariants, approximations to $\pi$}

\begin{abstract}
\noindent Re presenting the traditional proof of Srinivasa Ramanujan's own favorite series for the reciprocal of $\pi$ :
\begin{equation*}
\frac{1}{\pi} = \frac{\sqrt{8}}{9801} \sum_{n=0}^{+\infty} \frac{(4n)!}{(n!)^4} \frac{1103 + 26390 n}{396^{4n}} \; \text{,}
\end{equation*}
as well as several other examples of Ramanujan's infinite series. As a matter of fact, the derivation of such formulae has involved specialized knowledge of identities of classical functions and modular functions.
\end{abstract}
\maketitle

\noindent \par The Archimedes' constant $\pi$ appears in many formulae~\cite{BBB1} in various areas of mathematics and physics, such as :
\begin{eqnarray}
\text{James Gregory (1671)} & & \sum_{n=0}^{+\infty} \frac{(-1)^n}{2n+1} = \frac{\pi}{4} \; \text{,} \label{gregory} \\
\text{ Leonhard Euler (1734)} & & \sum_{n=0}^{+\infty} \frac{1}{n^2} = \frac{\pi^2}{6} \; \text{,} \label{euler} \\
\text{Carl Friedrich Gauss (1809)} & &\int_{-\infty}^{+\infty} e^{-x^2} \, dx = \sqrt{\pi} \; \text{,} \label{gauss} \\
\text{Stephen Hawking (1974)} & & T = \frac{1}{8\pi k_B} \frac{\hbar c^3}{GM} \; \text{.}
\end{eqnarray}

\par The irrationality of $\pi$ was first proven by Jean-Henri Lambert in 1761. Finally in 1882, Ferdinand von Lindemann established its transcendence, thus laying to rest the problem of \guillemotleft{} squaring the circle \guillemotright{}.

\section{Aesthetics in mathematics ?}

\noindent \par In 2014, researchers in neurobiology~\cite{ZRBA} from the University College London (in United Kingdom) used functional MRI to image the brain activity of $15$ mathematicians (aged from 22 to 32 years, postgraduate or postdoctoral level, all recruited from colleges in London) when they viewed mathematical formulae. Each subject was given $60$ mathematical formulae - including~(\ref{gregory}), (\ref{euler}) or~(\ref{gauss}) that correspond successively to $\arctan(1)$, $\zeta(2)$ and $\Gamma \( \dfrac{1}{2} \) $ - to study at leisure and rate as ugly [$-1$], neutral [$0$] or beautiful [$+1$]. Note the absence of the nonsimple continued fraction :
\begin{eqnarray}
\text{William Brouncker (1655)} & & \frac{4}{\pi} = 1 + \cfrac{1^2}{2 
+ \cfrac{3^2}{2 
+ \cfrac{5^2}{2
+ \cfrac{7^2}{2
+ \cfrac{9^2}{2 + \ddots}}}}}
\end{eqnarray}
in their list. Results of the study showed that the one most consistently rated as \guillemotleft{} ugly \guillemotright{} was Equation~(14) :
\begin{equation} \label{rs_58}
\frac{1}{\pi} = \frac{\sqrt{8}}{9801} \sum_{n=0}^{+\infty} \frac{(4n)!}{(n!)^4} \frac{1103 + 26390 n}{396^{4n}} \; \text{,}
\end{equation}
an infinite series due to Ramanujan - with an average rating of $\unit{-0,7333}{}$ ! Truly, beauty is in the eye of the beholder.

\par Since the starting point of~(\ref{rs_58}) lays upon the new foundations of elliptic integrals instilled by the works of both Niels Henrik Abel and Carl Gustav J. Jacobi~\cite{Jacobi} in the 19\textsuperscript{th} century, we might remember the premonitory words of Felix Klein :
\begin{quote}
\guillemotleft{} \emph{When I was a student, Abelian functions were, as an effect of the Jacobian tradition, considered the uncontested summit of mathematics, and each of us was ambitious to make progress in this field. And now ? The younger generation hardly knows Abelian functions.} \guillemotright{}
\end{quote}

\par Historically, the identity~(\ref{rs_58}) appeared in~\cite{Ramanujan}. Afterwards, it fell into near oblivion, until the end of 1985 when it was revived in a modern computational context. Seven decades after its publication, Bill Gosper Jr. used it for computing $\unit{17,5.10^6}{}$ decimal digits of $\pi$ - and briefly held the world record. But a significant issue remained : no mathematical proof existed back then that the series~(\ref{rs_58}) actually converges to $\dfrac{1}{\pi}$. It was somehow a leap of faith, yet an educated one. In fact, he verified beforehand that the sum was correct to $10$ million places by comparing this same number of digits of his own calculation to a previous calculation done by Yasumasa Kanada \emph{and al.}

\section{Preliminaries}

\subsection{Jacobi's elliptic integrals}

\noindent \par Let $k \in ]0,1[$ denote the elliptic modulus, then the quantity $k'=\sqrt{1-k^2}$ is called the complementary modulus. Complete elliptic integrals of the first and second kinds are respectively defined as :
\begin{eqnarray}
& & K(k) = \int_0^{\frac{\pi}{2}} \frac{d\theta}{\sqrt{1-k^2 \sin^2 \theta}} = \frac{\pi}{2} \, \phantom{}_2 F_1 \( \begin{array}{c} \dfrac{1}{2}, \dfrac{1}{2} \\ [1.4ex] 1 \end{array} \, \Bigg| \, k^2 \) \\
& \text{and} & E(k) = \int_0^{\frac{\pi}{2}} \sqrt{1-k^2 \sin^2 \theta} \, d\theta = \frac{\pi}{2} \, \phantom{}_2 F_1 \( \begin{array}{c} -\dfrac{1}{2}, \dfrac{1}{2} \\ [1.4ex] 1 \end{array} \, \Bigg| \, k^2 \) \; \text{,}
\end{eqnarray}
while their derivatives are given by :
\begin{equation} \label{derivatives}
\frac{dK}{dk} = \frac{E - k'^2 K}{kk'^2} \quad \text{and} \quad \frac{dE}{dk} = \frac{E - K}{k} \; \text{.}
\end{equation}

\par It is also customary to define the complementary integrals $K'$ and $E'$ as :
\begin{equation*}
K'(k) = K(k') \quad \text{and} \quad E'(k) = E(k') \; \text{.}
\end{equation*}
Finally, these 4 quantities $K$, $K'$, $E$ and $E'$ are linked by the remarkable Legendre relation :
\begin{equation} \label{legendre}
K(k) E'(k) + E(k) K'(k) - K(k) K'(k) = \frac{\pi}{2} \; \text{.}
\end{equation}

\subsection{Jacobi's theta functions}

\noindent \par The theta functions~\cite{Jacobi}, \cite{Lawden} are classically defined as :
\begin{equation} \label{theta_functions}
\theta_2(q) = \sum_{n=-\infty}^{+\infty} q^{ \( n+\frac{1}{2} \) ^2} \quad \text{,} \quad \theta_3(q) = \sum_{n=-\infty}^{+\infty} q^{n^2} \quad \text{and} \quad \theta_4(q) = \sum_{n=-\infty}^{+\infty} (-1)^n q^{n^2} = \theta_3(-q)
\end{equation}
for $|q| < 1$. After rewriting the nome $q$ in terms of the elliptic modulus $k$ :
\begin{equation*}
q = \exp \[ -\pi \frac{K'(k)}{K(k)} \] \; \text{,}
\end{equation*}
it is valuable to regard $k$ as a function of $q$. Thus, we have inversely :
\begin{equation} \label{agm}
k = \frac{\theta_2^2(q)}{\theta_3^2(q)} \quad \text{,} \quad k' = \frac{\theta_4^2(q)}{\theta_3^2(q)} \quad \text{and} \quad K(k) = \frac{\pi}{2} \theta_3^2(q) \; \text{.}
\end{equation}

\subsection{Ramanujan-Weber's class invariants}

\noindent \par Let us introduce Ramanujan's class invariants :
\begin{equation}
G = \( \frac{1}{2 k k'} \) ^{\nicefrac{1}{12}} \quad \text{and} \quad g = \( \frac{k'^2}{2 k} \) ^{\nicefrac{1}{12}} \; \text{,}
\end{equation}
as well as the Klein's absolute invariant :
\begin{equation}
J = \frac{( 4 G^{24} - 1 )^3}{27 G^{24}} = \frac{( 4 g^{24} + 1 )^3}{27 g^{24}} = \frac{4}{27} \frac{\[ 1 - (k k')^2 \] ^3}{(k k')^4} \; \text{.}
\end{equation}

\par In terms of Ramanujan's class invariants, we can explicitly write the elliptic moduli as :
\begin{eqnarray*}
& & k = \frac{1}{2} \( \sqrt{1 + \frac{1}{G^{12}}} - \sqrt{1 - \frac{1}{G^{12}}} \) \quad \text{,} \quad k' = \frac{1}{2} \( \sqrt{1 + \frac{1}{G^{12}}} + \sqrt{1 - \frac{1}{G^{12}}} \) \; \text{,} \\
& \text{or} & k = g^6 \sqrt{g^{12} + \frac{1}{g^{12}}} - g^{12} \quad \text{,} \quad k' = \sqrt{2k} \, g^6 \; \text{.}
\end{eqnarray*}

\subsection{Singular value functions $\lambda^*$ and $\alpha$}

\begin{definition}
Let $\lambda^*(r) = k(e^{-\pi \sqrt{r}})$ be as in~(\ref{agm}), then the singular value function of the second kind is defined by :
\begin{equation}
\alpha(r) = \frac{E'(k)}{K(k)} - \frac{\pi}{4 \big[ K(k) \big]^2}
\end{equation}
for positive $r$.
\end{definition}
Since $\dps{\lim_{r \to +\infty} \lambda^*(r) = 0}$, then $\alpha(r)$ converges to $\dfrac{1}{\pi}$ with exponential rate :
\begin{equation*}
0 < \alpha(r) - \frac{1}{\pi} \leq \sqrt{r} \big[ \lambda^*(r) \big]^2 \leq 16 \sqrt{r} \, e^{-\pi \sqrt{r}} \; \text{.}
\end{equation*}
Using the functional equation~(\ref{legendre}) and the fact that $\dfrac{K' \big( \lambda^*(r) \big)}{K \big( \lambda^*(r) \big)} = \sqrt{r}$, we get :
\begin{equation*}
\alpha(r) = \frac{\pi}{4 \big[ K(k) \big]^2} - \sqrt{r} \[ \frac{E(k)}{K(k)} - 1 \] \; \text{.}
\end{equation*}
On substituting $E$ with the differential equation~(\ref{derivatives}), we may establish that :
\begin{equation*}
\alpha(r) = \frac{1}{\pi} \[ \frac{\pi}{2 K(k)} \] ^2 - \sqrt{r} \[ kk'^2 \frac{1}{K(k)} \frac{dK}{dk} - k^2 \] \; \text{,}
\end{equation*}
so that :
\begin{equation} \label{pi_reciprocal}
\frac{1}{\pi} = \sqrt{r} k k'^2 \[ \( \frac{2}{\pi} \) ^2 K(k) \frac{dK}{dk} \] + \[ \alpha(r) - \sqrt{r} k^2 \] \[ \frac{2}{\pi} K(k) \] ^2
\end{equation}
where $k = \lambda^*(r)$. Also, observe that $\alpha(r)$ is algebraic for $r \in \bb{Q}_+$ (as seen in Tables~\ref{odd_n} and~\ref{even_n} in the next section, or in the computation of $g_{58}^2$ and $k_{58}$ in Subsection~\ref{rs_58_derivation}). Actually, it is well-known that the quantities $\lambda^*(r)$, $G_r$, $g_r$ and $\alpha(r)$ are algebraic numbers expressible by surds when $r$ is a positive rational number.

\subsection{Quadratic and cubic transformations of the hypergeometric function $\protect\phantom{}_2 F_1$}

\noindent \par Let us recall the definition of the hypergeometric series :
\begin{equation}
\phantom{}_2 F_1 \( \begin{array}{c} a,b \\ c \end{array} \, \Big| \, z \) = \sum_{n=0}^{+\infty} \frac{(a)_n (b)_n}{(c)_n} \frac{z^n}{n!} \; \text{,}
\end{equation}
where parameters $a$, $b$ and $c$ are arbitrary complex numbers, and $(a)_n = \dfrac{\Gamma(a+n)}{\Gamma(a)}$ denotes the Pochhammer symbol. However, if and only if the numbers :
\begin{equation} \label{parameters_test}
\pm (1-c) \quad \text{,} \quad \pm (a-b) \quad \text{,} \quad \pm (a+b-c)
\end{equation}
have the property that one of them equals $\dfrac{1}{2}$ or that two of them are equal, then there exists a so-called quadratic transformation. 

\begin{proposition} \label{proposition_1}
For $k \in \[ 0, \dfrac{1}{\sqrt{2}} \] $, we have :
\begin{eqnarray}
& & \frac{2}{\pi} K(k) = \phantom{}_2 F_1 \( \begin{array}{c} \dfrac{1}{4}, \dfrac{1}{4} \\ [1.4ex] 1 \end{array} \, \Bigg| \, (2kk')^2 \) \label{quadratic_g} \\
& \text{and} & \[ \frac{2}{\pi} K(k) \] ^2 = \phantom{}_3 F_2 \( \begin{array}{c} \dfrac{1}{2}, \dfrac{1}{2}, \dfrac{1}{2} \\ [1.4ex] 1,1 \end{array} \, \Bigg| \, (2kk')^2 \) \; \text{.} \label{clausen_g}
\end{eqnarray}
\end{proposition}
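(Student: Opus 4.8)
The plan is to read off both identities from the theory of quadratic transformations: the first, (\ref{quadratic_g}), is \emph{itself} a quadratic transformation of $\phantom{}_2 F_1$, while the second, (\ref{clausen_g}), is obtained by squaring the first and invoking Clausen's formula. The parameters $a=b=\tfrac14$, $c=1$ of the function on the right of (\ref{quadratic_g}) satisfy $a+b-c=-\tfrac12$, so one of the quantities in (\ref{parameters_test}) equals $\tfrac12$ and a quadratic transformation is guaranteed; the relevant one is the classical
\begin{equation*}
\phantom{}_2 F_1 \left( \begin{array}{c} \tfrac14, \tfrac14 \\ [1.0ex] 1 \end{array} \, \bigg| \, 4z(1-z) \right) = \phantom{}_2 F_1 \left( \begin{array}{c} \tfrac12, \tfrac12 \\ [1.0ex] 1 \end{array} \, \bigg| \, z \right) \; \text{.}
\end{equation*}
Setting $z=k^2$ turns the argument $4z(1-z)$ into $4k^2k'^2 = (2kk')^2$, while the right-hand side is exactly $\tfrac{2}{\pi}K(k)$ by the defining series of $K$, and this is precisely (\ref{quadratic_g}).

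To justify the transformation I would either cite it as a known quadratic transformation, or prove it self-containedly by checking that $z\mapsto \phantom{}_2 F_1(\tfrac14,\tfrac14;1;4z(1-z))$ solves the same second-order hypergeometric ODE as $\phantom{}_2 F_1(\tfrac12,\tfrac12;1;z)$ near the regular point $z=0$, where both equal $1$ and have matching first derivatives; uniqueness for the resulting initial-value problem then forces equality. The one genuinely delicate point — and the reason the statement restricts to $k\in[0,1/\sqrt{2}]$ — is that the substitution $w=4z(1-z)$ is two-to-one, taking the same value at $z$ and at $1-z$. The power-series identity is valid only on the branch $z\in[0,\tfrac12]$, on which $z\mapsto w$ is increasing and injective; and $z=k^2\in[0,\tfrac12]$ is exactly the condition $k\in[0,1/\sqrt{2}]$. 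For $k>1/\sqrt{2}$ the roles of $k$ and $k'$ swap and the correct argument on the right becomes $k'^2$ instead of $k^2$.

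For (\ref{clausen_g}) I would square (\ref{quadratic_g}) and apply Clausen's formula $\big[\phantom{}_2 F_1(a,b;a+b+\tfrac12;z)\big]^2 = \phantom{}_3 F_2(2a,2b,a+b;a+b+\tfrac12,2a+2b;z)$ with $a=b=\tfrac14$ and $z=(2kk')^2$. Here $2a=2b=a+b=\tfrac12$ while $a+b+\tfrac12 = 2a+2b = 1$, so the right-hand side collapses to $\phantom{}_3 F_2(\tfrac12,\tfrac12,\tfrac12;1,1;(2kk')^2)$, which is (\ref{clausen_g}). The main obstacle is not the algebra — routine once the correct transformation is in hand — but the branch analysis in the first step: the quadratic map $4z(1-z)$ folds the $z$-line, so one must argue carefully that the restriction $k\leq 1/\sqrt{2}$ selects the sheet on which the identity holds. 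Clausen's formula, by contrast, enters as a black box and needs only the matching of parameters.
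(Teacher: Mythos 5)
Your proof is correct and follows essentially the same route as the paper: the first identity is Kummer's quadratic transformation $\phantom{}_2 F_1\!\left(2a,2b;a+b+\tfrac12;z\right)=\phantom{}_2 F_1\!\left(a,b;a+b+\tfrac12;4z(1-z)\right)$ specialized to $a=b=\tfrac14$, $z=k^2$, verified via the hypergeometric ODE, and the second follows from Clausen's identity with the same parameter matching. Your additional remarks on the two-to-one nature of $z\mapsto 4z(1-z)$ and the role of the restriction $k\leq \tfrac{1}{\sqrt{2}}$ are a welcome elaboration of a point the paper leaves implicit.
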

\begin{proof}
The first identity~(\ref{quadratic_g}) derives from Kummer's identity :
\begin{equation}
\phantom{}_2 F_1 \( \begin{array}{c} 2a,2b \\ [0.2ex] a+b+\dfrac{1}{2} \end{array} \, \Bigg| \, z \) = \phantom{}_2 F_1 \( \begin{array}{c} a,b \\ [0.2ex] a+b+\dfrac{1}{2} \end{array} \, \Bigg| \, 4z (1-z) \)
\end{equation}
and can be verified by showing that both sides satisfy the appropriate hypergeometric differential equation, are analytic and agree at $0$. The second identity~(\ref{clausen_g}) is a special case of Clausen's product identity :
\begin{equation} \label{clausen_product}
\phantom{}_2 F_1 \( \begin{array}{c} \dfrac{1}{4}+a, \dfrac{1}{4}+b \\ [1.4ex] 1+a+b \end{array} \, \Bigg| \, z \) \, \phantom{}_2 F_1 \( \begin{array}{c} \dfrac{1}{4}-a, \dfrac{1}{4}-b \\ [1.4ex] 1-a-b \end{array} \, \Bigg| \, z \) = \phantom{}_3 F_2 \( \begin{array}{c} \dfrac{1}{2}, \dfrac{1}{2}+a-b, \dfrac{1}{2}-a+b \\ [1.4ex] 1+a+b, 1-a-b \end{array} \, \Bigg| \, z \)
\end{equation}
for hypergeometric functions.
\end{proof}

\par In like fashion, a cubic transformation exists if and only if either two of the numbers in~(\ref{parameters_test}) are equal to $\dfrac{1}{3}$ or if :
\begin{equation*}
1-c = \pm (a-b) = \pm (a+b-c) \; \text{.}
\end{equation*}
Thus, quadratic and cubic transformations of $\phantom{}_2 F_1$ lead to a variety of alternate hypergeometric expressions for $K$ and $K^2$.

\begin{proposition} \label{proposition_2}
We also have :
\begin{align*}
& \frac{2}{\pi} K(k) = \frac{1}{k'} \, \phantom{}_2 F_1 \( \begin{array}{c} \dfrac{1}{4}, \dfrac{1}{4} \\ [1.4ex] 1 \end{array} \, \Bigg| \, -\( \frac{2k}{k'^2} \) ^2 \) & & \hspace{-21.0mm} \text{for } k \in [ 0, \sqrt{2} - 1 ] \; \text{,} \\
& \frac{2}{\pi} K(k) = \frac{1}{\sqrt{k'}} \, \phantom{}_2 F_1 \( \begin{array}{c} \dfrac{1}{4}, \dfrac{1}{4} \\ [1.4ex] 1 \end{array} \, \Bigg| \, -\( \frac{k^2}{2k'} \) ^2 \) & & \hspace{-21.0mm} \text{for } k^2 \in [ 0, 2 ( \sqrt{2} - 1 ) ] \; \text{,} \\
& \frac{2}{\pi} K(k) = \frac{1}{\sqrt{1 + k^2}} \, \phantom{}_2 F_1 \( \begin{array}{c} \dfrac{1}{8}, \dfrac{3}{8} \\ [1.4ex] 1 \end{array} \, \Bigg| \, \( \frac{2}{g^{12} + g^{-12}} \) ^2 \) & & \hspace{-21.0mm} \text{for } k \in [ 0, \sqrt{2} - 1 ] \; \text{,} \\
& \frac{2}{\pi} K(k) = \frac{1}{\sqrt{k'^2 - k^2}} \, \phantom{}_2 F_1 \( \begin{array}{c} \dfrac{1}{8}, \dfrac{3}{8} \\ [1.4ex] 1 \end{array} \, \Bigg| \, -\( \frac{2}{G^{12} - G^{-12}} \) ^2 \) & & \hspace{-21.0mm} \text{for } k \in \[ 0, \frac{1 - \sqrt{\sqrt{2} - 1}}{2^{\nicefrac{3}{4}}} \] \; \text{,} \\
\text{and} \quad & \frac{2}{\pi} K(k) = \frac{1}{\big[ 1 - (kk')^2 \big] ^{\nicefrac{1}{4}}} \, \phantom{}_2 F_1 \( \begin{array}{c} \dfrac{1}{12}, \dfrac{5}{12} \\ [1.4ex] 1 \end{array} \, \Bigg| \, \frac{1}{J} \) & & \hspace{-21.0mm} \text{for } k \in \[ 0, \frac{1}{\sqrt{2}} \] \; \text{.}
\end{align*}
\end{proposition}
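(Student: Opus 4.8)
The plan is to obtain all five formulae by feeding the two representations of $\frac{2}{\pi}K(k)$ already at our disposal — the classical $\frac{2}{\pi}K(k) = {}_2F_1\left(\frac12,\frac12;1;k^2\right)$ and the quartic one~(\ref{quadratic_g}) of Proposition~\ref{proposition_1} — into the quadratic and cubic transformations of ${}_2F_1$ whose very availability is governed by the criterion~(\ref{parameters_test}). For each target, three routine ingredients close the argument: first, matching the parameter triple produced by the transformation against the one claimed; second, matching the transformed argument, which is pure algebra once $k^2 + k'^2 = 1$ and the definitions of $g$, $G$ and $J$ are in hand; and third, fixing the single multiplicative constant by letting $k \to 0^+$, where every ${}_2F_1$ tends to $1$, every prefactor tends to $1$, and $\frac{2}{\pi}K(0) = 1$.

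I would treat first the two lines with parameters $\frac14,\frac14$. Starting from~(\ref{quadratic_g}), the entry $a+b-c = -\frac12$ places this function at the node $c = a+b+\frac12$, so a Pfaff--Landen type quadratic transformation applies and carries the argument $(2kk')^2$ to $-\left(\frac{2k}{k'^2}\right)^2$, respectively $-\left(\frac{k^2}{2k'}\right)^2$, with prefactors $\frac{1}{k'}$ and $\frac{1}{\sqrt{k'}}$. The one algebraic simplification worth recording is
\begin{equation*}
1 - (2kk')^2 = (k'^2 - k^2)^2 ,
\end{equation*}
which also foreshadows the prefactor of the fourth line. Next, a further quadratic transformation lowers $\frac14,\frac14$ to $\frac18,\frac38$; rewriting $\frac{2k}{k'^2} = g^{-12}$, $\frac{k'^2}{2k} = g^{12}$, $2kk' = G^{-12}$ and $\frac{1}{2kk'} = G^{12}$ turns the arguments into $\left(\frac{2}{g^{12}+g^{-12}}\right)^2$ and $-\left(\frac{2}{G^{12}-G^{-12}}\right)^2$, with prefactors $\frac{1}{\sqrt{1+k^2}}$ and $\frac{1}{\sqrt{k'^2-k^2}}$. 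The fifth line is the genuinely different one. Although $(1/12,5/12;1)$ itself admits a quadratic transformation (since $|a+b-c| = \frac12$) linking it to $(1/6,5/6;1)$, reaching the sextic base from the classical $(1/2,1/2;1)$ demands a bona fide cubic transformation — the second clause of~(\ref{parameters_test}) — and the resulting argument only closes into the tidy shape $\frac{1}{J}$ after the definition of Klein's invariant is substituted, the prefactor becoming $[1-(kk')^2]^{-1/4}$.

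Finally, the ranges of validity. On each stated interval both sides are analytic in $k$; since the governing transformation guarantees agreement of the full Taylor expansions at $k = 0$ (not merely the leading coefficient of the constant-fixing step), the identity theorem propagates equality across the whole interval, which therefore extends exactly up to the value of $k$ at which the ${}_2F_1$ argument first reaches the circle $|z| = 1$ bounding the disk of convergence of the series. Solving these boundary equations reproduces the endpoints: $-\left(\frac{2k}{k'^2}\right)^2 = -1$ gives $k^2 + 2k - 1 = 0$, i.e. $k = \sqrt2 - 1$; $-\left(\frac{k^2}{2k'}\right)^2 = -1$ gives $k^2 = 2(\sqrt2 - 1)$; $\left(\frac{2}{g^{12}+g^{-12}}\right)^2 = 1$ forces $k'^2 = 2k$, again $k = \sqrt2 - 1$; $-\left(\frac{2}{G^{12}-G^{-12}}\right)^2 = -1$ forces $(1-2k^2)^2 = 4kk'$, whose relevant root is $k = \frac{1-\sqrt{\sqrt2-1}}{2^{3/4}}$; and $\frac{1}{J} = 1$ forces $kk' = \frac12$, i.e. $k = \frac{1}{\sqrt2}$.

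I expect the main obstacle to be twofold. For each multivalued transformation one must select the branch (sheet) that returns $K$ itself rather than $K'$ or a linear combination of the two — over the entire interval, not just near $k=0$ — which is what makes the endpoint analysis meaningful. And the fifth, cubic, case is the one step that leaves the purely quadratic world: it must be routed through the modular invariant $J$, so that both the passage from signature $2$ to the sextic parameters and the recognition of the argument as $1/J$ rest on the cubic transformation together with the definition of $J$, rather than on the elementary $k^2+k'^2=1$ bookkeeping that suffices for the other four identities.
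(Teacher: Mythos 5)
Your outline is essentially correct, but note that the paper itself gives no derivation at all here: its entire proof of Proposition~\ref{proposition_2} is the citation ``See e.g.~\cite{Goursat} or~\cite{Erdelyi}'', deferring to the classical tables of quadratic and cubic transformations of $ {}_2F_1$. What you sketch --- feeding $ {}_2F_1\big(\tfrac12,\tfrac12;1;k^2\big)$ and the quartic representation~(\ref{quadratic_g}) of Proposition~\ref{proposition_1} into the transformations licensed by the criterion~(\ref{parameters_test}), matching arguments by elementary algebra with $k^2+k'^2=1$ and the definitions of $g$, $G$ and $J$, fixing constants as $k\to 0^+$, and locating the endpoints of validity where the transformed argument first reaches $|z|=1$ --- is precisely the content of those tables made explicit, so your route is genuinely more self-contained than the paper's. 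Your checkable assertions are all right: $1-(2kk')^2=(k'^2-k^2)^2$ is the correct pivot, $g^{-12}=2k/k'^2$ and $G^{-12}=2kk'$ convert the arguments as claimed, and each boundary equation reproduces the stated endpoint (for instance $(1-2k^2)^2=4kk'$ gives $k^2=\tfrac12\big(1-\sqrt{2}\,\sqrt{\sqrt2-1}\big)$, which is the square of $\big(1-\sqrt{\sqrt2-1}\big)/2^{3/4}$, and $J=1$ forces $kk'=\tfrac12$). What the sketch still owes, and what the paper discharges by citation, is the explicit statement and verification of each individual transformation --- which Goursat entry carries $(2kk')^2$ to $-(2k/k'^2)^2$ with prefactor $1/k'$, which one lowers $\tfrac14,\tfrac14$ to $\tfrac18,\tfrac38$, and the composite quadratic--cubic chain reaching $\tfrac1{12},\tfrac5{12}$ and $1/J$ --- together with the branch selection you yourself flag as the delicate point. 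As written these are asserted rather than derived; each would need to be written down and checked (for example by the differential-equation-plus-initial-conditions argument the paper uses for Proposition~\ref{proposition_1}), but the architecture is sound and, unlike the paper's one-line proof, it explains where the otherwise mysterious validity intervals come from.
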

\begin{proof}
See e.g.~\cite{Goursat} or~\cite{Erdelyi}.
\end{proof}

\begin{proposition} \label{proposition_3}
For $k$ restricted as in Proposition~\ref{proposition_2} :
\begin{eqnarray*}
& & \[ \frac{2}{\pi} K(k) \] ^2 = \frac{1}{k'^2} \, \phantom{}_3 F_2 \( \begin{array}{c} \dfrac{1}{2}, \dfrac{1}{2}, \dfrac{1}{2} \\ [1.4ex] 1,1 \end{array} \, \Bigg| \, -\( \frac{2k}{k'^2} \) ^2 \) \; \text{,} \\
& & \[ \frac{2}{\pi} K(k) \] ^2 = \frac{1}{k'} \, \phantom{}_3 F_2 \( \begin{array}{c} \dfrac{1}{2}, \dfrac{1}{2}, \dfrac{1}{2} \\ [1.4ex] 1,1 \end{array} \, \Bigg| \, -\( \frac{k^2}{2k'} \) ^2 \) \; \text{,} \\
& & \[ \frac{2}{\pi} K(k) \] ^2 = \frac{1}{1 + k^2} \, \phantom{}_3 F_2 \( \begin{array}{c} \dfrac{1}{4}, \dfrac{3}{4}, \dfrac{1}{2} \\ [1.4ex] 1,1 \end{array} \, \Bigg| \, \( \frac{2}{g^{12} + g^{-12}} \) ^2 \) \; \text{,} \\
& & \[ \frac{2}{\pi} K(k) \] ^2 = \frac{1}{k'^2 - k^2} \, \phantom{}_3 F_2 \( \begin{array}{c} \dfrac{1}{4}, \dfrac{3}{4}, \dfrac{1}{2} \\ [1.4ex] 1,1 \end{array} \, \Bigg| \, -\( \frac{2}{G^{12} - G^{-12}} \) ^2 \) \; \text{,} \\
& \text{and} & \[ \frac{2}{\pi} K(k) \] ^2 = \frac{1}{\sqrt{1 - (kk')^2}} \, \phantom{}_3 F_2 \( \begin{array}{c} \dfrac{1}{6}, \dfrac{5}{6}, \dfrac{1}{2} \\ [1.4ex] 1,1 \end{array} \, \Bigg| \, \frac{1}{J} \) \; \text{.}
\end{eqnarray*}
\end{proposition}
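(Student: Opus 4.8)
The plan is to derive every identity by squaring the corresponding representation of $\dfrac{2}{\pi} K(k)$ supplied by Proposition~\ref{proposition_2}, and then to fold the resulting square of a $\phantom{}_2 F_1$ into a single $\phantom{}_3 F_2$ through the very Clausen product identity~(\ref{clausen_product}) that was already used in the proof of Proposition~\ref{proposition_1}.

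First I would record the key structural observation: each entry of Proposition~\ref{proposition_2} expresses $\dfrac{2}{\pi} K(k)$ as a prefactor times a Gauss function $\phantom{}_2 F_1 \left( \begin{array}{c} a, b \\ 1 \end{array} \, \big| \, z \right)$ whose parameters satisfy $a + b + \dfrac{1}{2} = 1$. Explicitly, $(a,b)$ runs through $\left( \dfrac{1}{4}, \dfrac{1}{4} \right)$, $\left( \dfrac{1}{8}, \dfrac{3}{8} \right)$ and $\left( \dfrac{1}{12}, \dfrac{5}{12} \right)$, so that $a + b = \dfrac{1}{2}$ in every case. This is precisely the hypothesis under which Clausen's formula applies.

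Next I would specialize~(\ref{clausen_product}) to the diagonal case $b = -a$, which forces the two $\phantom{}_2 F_1$ factors on the left-hand side to coincide and turns the product into a genuine square:
\begin{equation*}
\left[ \phantom{}_2 F_1 \left( \begin{array}{c} \dfrac{1}{4} + a, \dfrac{1}{4} - a \\ [1.4ex] 1 \end{array} \, \Big| \, z \right) \right]^2 = \phantom{}_3 F_2 \left( \begin{array}{c} \dfrac{1}{2}, \dfrac{1}{2} + 2a, \dfrac{1}{2} - 2a \\ [1.4ex] 1, 1 \end{array} \, \Big| \, z \right) \; \text{.}
\end{equation*}
Choosing $a = 0$ reproduces the upper pair $\dfrac{1}{4}, \dfrac{1}{4}$ together with the numerator triple $\dfrac{1}{2}, \dfrac{1}{2}, \dfrac{1}{2}$ (this yields the first two identities); $a = \dfrac{1}{8}$ reproduces $\dfrac{1}{8}, \dfrac{3}{8}$ and $\dfrac{1}{2}, \dfrac{3}{4}, \dfrac{1}{4}$ (the third and fourth); and $a = \dfrac{1}{6}$ reproduces $\dfrac{1}{12}, \dfrac{5}{12}$ and $\dfrac{1}{2}, \dfrac{5}{6}, \dfrac{1}{6}$ (the last). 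Squaring the matching line of Proposition~\ref{proposition_2} then squares its prefactor — giving exactly $\dfrac{1}{k'^2}$, $\dfrac{1}{k'}$, $\dfrac{1}{1+k^2}$, $\dfrac{1}{k'^2-k^2}$ and $\dfrac{1}{\sqrt{1-(kk')^2}}$ — while leaving the argument $z$ untouched, so substituting the displayed Clausen evaluation finishes each case.

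There is no genuine analytic obstacle here, since convergence and the admissible ranges of $k$ are inherited verbatim from Proposition~\ref{proposition_2} and Clausen's formula holds on the common disc of convergence. The only point demanding care — and the step most likely to hide an error — is the bookkeeping of the parameter matching: one must check that the single free parameter $a$ can simultaneously reproduce the prescribed upper pair of each Gauss function and the prescribed numerator triple of the target $\phantom{}_3 F_2$. The relation $a + b + \dfrac{1}{2} = 1$ noted above is exactly what guarantees that the lower row collapses to $1, 1$, so this matching is forced rather than fortuitous.
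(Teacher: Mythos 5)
Your proposal is correct and follows exactly the paper's own route: the paper's proof is the one-line instruction ``Apply Clausen's identity~(\ref{clausen_product}) to Proposition~\ref{proposition_2}'', and your specialization $b=-a$ with $a \in \left\{ 0, \tfrac{1}{8}, \tfrac{1}{6} \right\}$ is precisely the bookkeeping that instruction leaves implicit. The parameter matching and the squared prefactors all check out.
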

\begin{proof}
Apply the Clausen's identity~(\ref{clausen_product}) to Proposition~\ref{proposition_2}.
\end{proof}

\par In each case, we have provided series for $\dfrac{2}{\pi} K$ and $\( \dfrac{2}{\pi} K \) ^2$ in terms of the Ramanujan's invariants. Indeed, we have :
\begin{equation*}
\[ \frac{2}{\pi} K(k) \] ^2 = m(k) F \big( \varphi(k) \big)
\end{equation*}
for algebraic $m$ and $\varphi$, while $F (\varphi)$ has a hypergeometric-type power series expansion $\dps{\sum_{n=0}^{+\infty} a_n \varphi^n}$. Then :
\begin{equation*}
\( \frac{2}{\pi} \) ^2 K \frac{dK}{dk} = \frac{1}{2} \[ \frac{dm}{dk} F + m \frac{d\varphi}{dk} \frac{dF}{d\varphi} \]
\end{equation*}
and substitution in~(\ref{pi_reciprocal}) lead to :
\begin{equation} \label{general_form}
\frac{1}{\pi} = \sum_{n=0}^{+\infty} a_n \left\{ \frac{1}{2} \sqrt{r} k k'^2 \frac{dm}{dk} + \[ \alpha(r) - \sqrt{r} k^2 \] m + \frac{1}{2} n \sqrt{r} k k'^2 \frac{m}{\varphi} \frac{d\varphi}{dk} \right\} \varphi^n \; \text{.}
\end{equation}
Thus for rational $r$, the braced term in~(\ref{general_form}) is of the form $A+nB$ with $A$ and $B$ algebraic.

\section{Examples of hypergeometric-like series representations for $\dfrac{1}{\pi}$}

\subsection{Deriving Ramanujan's series for $\dfrac{1}{\pi}$}

\noindent \par By combining Propositions~\ref{proposition_1}, \ref{proposition_2} and~\ref{proposition_3} with the formula~(\ref{general_form}), it is now straightforward to build the next 6 series :
\begin{eqnarray}
(\text{series in } G_N) & & \frac{1}{\pi} = \sum_{n=0}^{+\infty} \Bigg[ \frac{1}{n!} \( \frac{1}{2} \) _n \Bigg]^3 \[ \alpha(N) - \sqrt{N} k_N^2 + n \sqrt{N} \( k_N'^2 - k_N^2 \) \] \( \frac{1}{G_N^{12}} \) ^{2n} \label{upper_gn} \\
(\text{series in } g_N) & & \frac{1}{\pi} = \sum_{n=0}^{+\infty} (-1)^n \Bigg[ \frac{1}{n!} \( \frac{1}{2} \) _n \Bigg]^3 \[ \frac{\alpha(N)}{k_N'^2} + n \sqrt{N} \frac{1 + k_N^2}{k_N'^2} \] \( \frac{1}{g_N^{12}} \) ^{2n} \\
(\text{series in } g_{4N} = 2^{\nicefrac{1}{4}} g_N G_N) & & \frac{1}{\pi} = \sum_{n=0}^{+\infty} (-1)^n \Bigg[ \frac{1}{n!} \( \frac{1}{2} \) _n \Bigg]^3 \left\{ \[ \alpha(N) - \sqrt{N} \frac{k_N^2}{2} \] \frac{1}{k_N'} + n \sqrt{N} \( k_N' + \frac{1}{k_N'} \) \right\} \( \frac{1}{g_{4N}^{12}} \) ^{2n}
\end{eqnarray}
On setting $x_N = \dfrac{2}{g_N^{12} + g_N^{-12}} = \dfrac{4 k_N k_N'^2}{(1 + k_N^2)^2}$ and $y_N = \dfrac{2}{G_N^{12} - G_N^{-12}} = \dfrac{4 k_N k_N'}{1 - (2 k_N k_N')^2}$ :
\begin{eqnarray}
(\text{series in } x_N) & & \frac{1}{\pi} = \sum_{n=0}^{+\infty} \frac{\( \dfrac{1}{4} \) _n \( \dfrac{1}{2} \) _n \( \dfrac{3}{4} \) _n}{(n!)^3} \[ \frac{\alpha(N)}{x_N ( 1 + k_N^2 )} - \frac{\sqrt{N}}{4 g_N^{12}} + n \sqrt{N} \frac{g_N^{12} - g_N^{-12}}{2} \] x_N^{2n+1} \label{xn} \\
(\text{series in } y_N) & & \frac{1}{\pi} = \sum_{n=0}^{+\infty} (-1)^n \frac{\( \dfrac{1}{4} \) _n \( \dfrac{1}{2} \) _n \( \dfrac{3}{4} \) _n}{(n!)^3} \[ \frac{\alpha(N)}{y_N ( k_N'^2 - k_N^2 )} + \sqrt{N} \frac{k_N^2 G_N^{12}}{2} + n \sqrt{N} \frac{G_N^{12} + G_N^{-12}}{2} \] y_N^{2n+1} \label{yn}
\end{eqnarray}
And eventually the series in $J_N$ :
\begin{equation} \label{jn}
\frac{1}{\pi} = \frac{1}{3 \sqrt{3}} \sum_{n=0}^{+\infty} \frac{\( \dfrac{1}{6} \) _n \( \dfrac{1}{2} \) _n \( \dfrac{5}{6} \) _n}{(n!)^3} \left\{ 2 \[ \alpha(N) - \sqrt{N} k_N^2 \] \( 4 G_N^{24} - 1 \) + \sqrt{N} \sqrt{1 - \frac{1}{G_N^{24}}} + 2n \sqrt{N} \( 8 G_N^{24} + 1 \) \sqrt{1 - \frac{1}{G_N^{24}}} \right\} \( \frac{1}{J_N^{\nicefrac{1}{2}}} \) ^{2n+1}
\end{equation}
that is valid for $N>1$.

\subsection{Applications}

\noindent \par Let us first evaluate the Pochhammer symbols. It is well-known that :
\begin{equation*}
\frac{1}{n!} \( \frac{1}{2}\) _n = \frac{1}{4^n} \binom{2n}{n}
\end{equation*}
in terms of the central binomial coefficient. For the remaining symbols, we may require the following lemma :

\begin{lemma}
For any $n \in \bb{N}$, we have :
\begin{eqnarray*}
& & \( \frac{1}{4} \) _n \( \frac{1}{2} \) _n \( \frac{3}{4} \) _n = \frac{1}{4^{4n}} \frac{(4n)!}{n!} \; \text{,} \\
& \text{as well as} & \( \frac{1}{6} \) _n \( \frac{1}{2} \) _n \( \frac{5}{6} \) _n = \frac{1}{12^{3n}} \frac{(6n)!}{(3n)!} \; \text{.}
\end{eqnarray*}
\end{lemma}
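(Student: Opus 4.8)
The plan is to prove both identities directly from the definition $(a)_n = \prod_{j=0}^{n-1}(a+j)$, converting each product of three Pochhammer symbols into a single product of integers in arithmetic progression and then recognizing the outcome as a ratio of factorials. This keeps everything elementary and avoids any analytic input; as a remark, both formulas also follow in one line from the Gauss multiplication formula for the Gamma function, which yields
\begin{equation*}
(a)_{mn} = m^{mn}\prod_{j=0}^{m-1}\left(\frac{a+j}{m}\right)_n \; \text{,}
\end{equation*}
but I will favour the self-contained product computation.

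First, for the identity involving $\tfrac14,\tfrac12,\tfrac34$, I would write
\begin{equation*}
\left(\frac{1}{4}\right)_n \left(\frac{1}{2}\right)_n \left(\frac{3}{4}\right)_n = \prod_{j=0}^{n-1}\left(j+\frac{1}{4}\right)\left(j+\frac{1}{2}\right)\left(j+\frac{3}{4}\right) = \frac{1}{4^{3n}}\prod_{j=0}^{n-1}(4j+1)(4j+2)(4j+3) \; \text{.}
\end{equation*}
The key observation is that re-inserting the missing factors $(4j+4)$ completes the numerator, since $\prod_{j=0}^{n-1}(4j+1)(4j+2)(4j+3)(4j+4) = (4n)!$ (these run over every integer from $1$ to $4n$ exactly once), while $\prod_{j=0}^{n-1}(4j+4) = 4^n\, n!$. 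Dividing gives $\prod_{j=0}^{n-1}(4j+1)(4j+2)(4j+3) = (4n)!/(4^n n!)$, and collecting the powers of $4$ produces the claimed $\tfrac{1}{4^{4n}}\tfrac{(4n)!}{n!}$.

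Next, for the identity involving $\tfrac16,\tfrac12,\tfrac56$, the same manipulation yields
\begin{equation*}
\left(\frac{1}{6}\right)_n \left(\frac{1}{2}\right)_n \left(\frac{5}{6}\right)_n = \frac{1}{6^{3n}}\prod_{j=0}^{n-1}(6j+1)(6j+3)(6j+5) \; \text{.}
\end{equation*}
Here the numerator is exactly the product of all odd integers from $1$ to $6n-1$, i.e.\ the double factorial $(6n-1)!!$. I would then invoke the standard identity $(2m-1)!! = (2m)!/(2^m m!)$ with $m=3n$ to rewrite it as $(6n)!/(2^{3n}(3n)!)$, and finally merge the powers via $6^{3n}2^{3n} = 12^{3n}$ to reach $\tfrac{1}{12^{3n}}\tfrac{(6n)!}{(3n)!}$.

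Every step is routine; the only place demanding care is the bookkeeping of the powers of $2$ and $3$ in the second identity, together with the exact form of the double-factorial identity, which is where an exponent slip would most plausibly creep in.
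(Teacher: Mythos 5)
Your proposal is correct and follows essentially the same route as the paper: both convert each Pochhammer symbol into a product over an arithmetic progression via $\left(\frac{p}{q}\right)_n = \frac{1}{q^n}\prod_{m=1}^n\left[p+(m-1)q\right]$ and then identify the combined product as a ratio of factorials. You merely make explicit the final identifications (completing the product with the factors $4j+4$, and the double-factorial identity) that the paper states without comment.
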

\begin{proof}
Let $p,q \in \bb{N}^*$, observe that :
\begin{equation*}
\( \frac{p}{q} \) _n = \frac{1}{q^n} \prod_{m=1}^n \big[ p + (m-1) q \big] \; \text{.}
\end{equation*}
Subsequently :
\begin{eqnarray*}
& & \( \frac{1}{4} \) _n \( \frac{1}{2} \) _n \( \frac{3}{4} \) _n = \frac{1}{4^{3n}} \prod_{m=1}^n (4m-3) (4m-2) (4m-1) = \frac{1}{4^{4n}} \frac{(4n)!}{n!} \; \text{,} \\
& \text{whereas} & \( \frac{1}{6} \) _n \( \frac{1}{2} \) _n \( \frac{5}{6} \) _n = \frac{1}{6^{3n}} \prod_{m=1}^n (6m-5) (6m-3) (6m-1) = \frac{1}{12^{3n}} \frac{(6n)!}{(3n)!} \; \text{.}
\end{eqnarray*}
\end{proof}

\begin{definition}
Let $d$ be a square-free integer, we consider the real quadratic number field $\Bbbk = \bb{Q}(\sqrt{d})$. If $\Delta_{\Bbbk}$ denotes the discriminant of $\Bbbk$ i.e. :
\begin{equation*}
\Delta_{\Bbbk} = \left\{ \begin{array}{cl} d & \text{if } d = 1 \pmod{4} \\ 4d & \text{if } d = 2,3 \pmod{4} \end{array} \right. \; \text{,}
\end{equation*}
then the fundamental unit $u_d > 1$ is uniquely characterized as the minimal real number :
\begin{equation}
u_d = \frac{a+b \sqrt{\Delta_{\Bbbk}}}{2}
\end{equation}
where $(a,b)$ is the smallest solution to $m^2 - \Delta_{\Bbbk} n^2 = \pm 4$ in positive integers. This equation is essentially Pell-Fermat's equation.
\end{definition}

\par Of course, the most challenging part in the formula~(\ref{general_form}) lies in the evaluation of the singular value function $\alpha$. For positive rational $r$, many values of $\alpha(r)$ are obtainable. But details would be slightly beyond the scope of this paper, with deep roots in number-theoretic objects and techniques such as modular equations, multipliers, modular forms, the Dedekind's $\eta$ function, and so on. Alternatively, we shall rely on Weber~\cite{Weber} and Ramanujan~\cite{Ramanujan}. Some of the nicest singular values are collected in the following tables.

\begin{table}[H]
{\renewcommand{\arraystretch}{2.4}
\centering
    \begin{tabular}{|>{\centering\arraybackslash}p{6.0mm}||c|c|c|c|}
        \hline
        $N$ & $k_N$ & $\dfrac{1}{G_N^{12}}$ & $\alpha(N)$ & $u_N$ \\ [0.8ex]
        \hline \hline
        $3$ & $\dfrac{\sqrt{3} - 1}{2 \sqrt{2}}$ & $\dfrac{1}{2}$ & $\dfrac{\sqrt{3} - 1}{2}$ & $2 + \sqrt{3}$ \\ [0.8ex]
        \hline
        $5$ & $\dfrac{\sqrt{\sqrt{5} - 1} - \sqrt{3 - \sqrt{5}}}{2}$ & $\bigg( \dfrac{\sqrt{5} - 1}{2} \bigg)^3$ & $\dfrac{\sqrt{5} - \sqrt{2 \sqrt{5} - 2}}{2}$ & $\dfrac{1 + \sqrt{5}}{2}$ \\ [0.8ex]
        \hline
        $7$ & $\dfrac{3 - \sqrt{7}}{4 \sqrt{2}}$ & $\dfrac{1}{8}$ & $\dfrac{\sqrt{7} - 2}{2}$ & $8 + 3 \sqrt{7}$ \\ [0.8ex]
        \hline
        $9$ & $\dfrac{( \sqrt{2} - 3^{\nicefrac{1}{4}} ) ( \sqrt{3} - 1 )}{2}$ & $( 2 - \sqrt{3} )^2$ & $\dfrac{3 - 3^{\nicefrac{3}{4}} \sqrt{2} ( \sqrt{3} - 1 )}{2}$ & $-$ \\ [0.4ex]
        \hline
        $13$ & $\dfrac{\sqrt{10 \sqrt{13} - 34} - 5 + \sqrt{13}}{2 \sqrt{2}}$ & $\bigg( \dfrac{\sqrt{13} - 3}{2} \bigg)^3$ & $\dfrac{\sqrt{13} - \sqrt{74 \sqrt{13} - 258}}{2}$ & $\dfrac{3 + \sqrt{13}}{2}$ \\ [0.8ex]
        \hline
        $15$ & $\dfrac{( 2 - \sqrt{3} ) ( 3 - \sqrt{5} ) ( \sqrt{5} - \sqrt{3} )}{8 \sqrt{2}}$ & $\dfrac{1}{8} \bigg( \dfrac{\sqrt{5} - 1}{2} \bigg) ^4$ & $\dfrac{\sqrt{15} - \sqrt{5} - 1}{2}$ & $4 + \sqrt{15}$ \\ [0.8ex]
        \hline
        $25$ & $\dfrac{( \sqrt{5} - 2 ) ( 3 - 2 \times 5^{\nicefrac{1}{4}} )}{\sqrt{2}}$ & $\bigg( \dfrac{\sqrt{5} - 1}{2} \bigg)^{12}$ & $\dfrac{5 \[ 1 - 2 \times 5^{\nicefrac{1}{4}} ( 7 - 3 \sqrt{5} ) \] }{2}$ & $-$ \\ [0.8ex]
        \hline
        $37$ & $\dfrac{\sqrt{290 \sqrt{37} - 1762} + 29 - 5 \sqrt{37}}{2 \sqrt{2}}$ & $( \sqrt{37} - 6 )^3$ & $\dfrac{\sqrt{37} - ( 171 - 25 \sqrt{37} ) \sqrt{\sqrt{37} - 6}}{2}$ & $6 + \sqrt{37}$ \\ [0.8ex]
        \hline
    \end{tabular}
    \bigskip
    \caption{Selected singular values, class invariants $G_N$ and fundamental units $u_N$ for $N$ odd.}
    \label{odd_n}}
\end{table}

\par In Table~\ref{odd_n}, observe that $G_N^4 = u_N$ for $N=5$, $13$ and $37$.

\begin{table}[H]
{\renewcommand{\arraystretch}{2.4}
\centering
    \begin{tabular}{|>{\centering\arraybackslash}p{6.0mm}||c|c|c|c|c|}
        \hline
        $N$ & $k_N$ & $\dfrac{1}{g_N^{12}}$ & $\alpha(N)$ & $u_{\nicefrac{N}{2}}$ & $u_N$ \\ [0.8ex]
        \hline \hline
        $2$ & $\sqrt{2} - 1$ & $1$ & $\sqrt{2} - 1$ & $-$ & $1 + \sqrt{2}$ \\
        \hline
        $6$ & $( 2 - \sqrt{3} ) ( 5 - 2 \sqrt{6} )^{\nicefrac{1}{2}}$ & $( \sqrt{2} - 1 )^2$ & $( \sqrt{2} + 1 ) ( 2 - \sqrt{3} ) ( 5 - 2 \sqrt{6} )^{\nicefrac{1}{2}} ( 3 - \sqrt{2} )$ & $2 + \sqrt{3}$ & $5 + 2 \sqrt{6}$ \\
        \hline
        $10$ & $( \sqrt{2} - 1 )^2 ( \sqrt{10} - 3 )$ & $\bigg( \dfrac{\sqrt{5} - 1}{2} \bigg)^6$ & $\bigg( \dfrac{\sqrt{5} + 1}{2} \bigg)^3 ( \sqrt{2} - 1 )^2 ( \sqrt{10} - 3 ) ( 3 \sqrt{5} - 4 )$ & $\dfrac{1 + \sqrt{5}}{2}$ & $3 + \sqrt{10}$ \\ [0.8ex]
        \hline
        $18$ & $( 7 - 4 \sqrt{3} ) ( 5 \sqrt{2} - 7 )$ & $( \sqrt{3} - \sqrt{2} )^4$ & $3 ( \sqrt{3} + \sqrt{2} )^2 ( 7 - 4 \sqrt{3} ) ( 5 \sqrt{2} - 7 ) ( 7 - 2 \sqrt{6} )$ & $-$ & $1 + \sqrt{2}$ \\
        \hline
        $22$ & $( 10 - 3 \sqrt{11} ) ( 197 - 42 \sqrt{22} )^{\nicefrac{1}{2}}$ & $( \sqrt{2} - 1 )^6$ & $( \sqrt{2} + 1 )^3 ( 10 - 3 \sqrt{11} ) ( 197 - 42 \sqrt{22} )^{\nicefrac{1}{2}} ( 33 - 17 \sqrt{2} )$ & $10 + 3 \sqrt{11}$ & $197 + 42 \sqrt{22}$ \\
        \hline
        $58$ & $( \sqrt{2} - 1 )^6 ( 13 \sqrt{58} - 99 )$ & $\bigg( \dfrac{\sqrt{29} - 5}{2} \bigg) ^6$ & $3 \bigg( \dfrac{\sqrt{29} + 5}{2} \bigg) ^3 ( \sqrt{2} - 1 )^6 ( 13 \sqrt{58} - 99 ) ( 33 \sqrt{29} - 148 )$ & $\dfrac{5 + \sqrt{29}}{2}$ & $99 + 13 \sqrt{58}$ \\ [0.8ex]
        \hline
    \end{tabular}
    \bigskip
    \caption{Selected singular values, class invariants $g_N$ and fundamental units $u_{\nicefrac{N}{2}}$ and $u_N$ for $N$ even.}
    \label{even_n}}
\end{table}

\par For $N=6$, $10$, $18$, $22$ and $58$, observe that the values of the function $\alpha$ in Table~\ref{even_n} are all expressed in the form $\alpha(N) = g_N^6 k_N f_N$, where $f_N$ is an element of some quadratic field $\bb{Q}(\sqrt{d})$ with $d \mid N$.

\par Many more singular moduli are given in~\cite{BBB3} or~\cite{Petrovic}.

\subsubsection{The case $N=7$}

\noindent \par Table~\ref{odd_n} provides :
\begin{equation*}
G_7^{12} = 8 \quad \text{and} \quad \alpha(7) = \frac{\sqrt{7}}{2} - 1 \; \text{,}
\end{equation*}
so that $k_7^2 = \dfrac{8 - 3 \sqrt{7}}{16}$ . By putting these values in the series~(\ref{upper_gn}) which is valid for $N>1$, we obtain :
\begin{equation}
\frac{1}{\pi} = \frac{1}{16} \sum_{n=0}^{+\infty} \frac{\big[ (2n)! \big]^3}{(n!)^6} \frac{5 + 7 \times 6 n}{64^{2n}} \; \text{.}
\end{equation}
This is equivalent to Equation~(29) in Ramanujan's original paper~\cite{Ramanujan}. Being composed of fractions whose numerators grow like $\sim 2^{6n}$ and whose denominators are exactly $16 \times 2^{12n}$, the above series can be employed to calculate the second block of $n$ binary digits of $\pi$ without calculating the first $n$ binary digits.

\par Note that the series~(\ref{yn}) is valid for $N \geq 4$. On using the invariant $y_7 = \dfrac{16}{63}$ in~(\ref{yn}), we get :
\begin{equation}
\frac{1}{\pi} = \frac{1}{9 \sqrt{7}} \sum_{n=0}^{+\infty} (-1)^n \frac{(4n)!}{(n!)^4} \frac{8 + 65 n}{63^{2n}} \; \text{,}
\end{equation}
while combining $J_7 = \( \dfrac{85}{4} \) ^3$ with~(\ref{jn}) shall produce the series :
\begin{equation}
\frac{1}{\pi} = \frac{18}{85} \sqrt{\frac{3}{85}} \sum_{n=0}^{+\infty} \frac{(6n)!}{(3n)! (n!)^3} \frac{8 + 7 \times 19 n}{255^{3n}} \; \text{.}
\end{equation}
One may recognize Equation~(34) of~\cite{Ramanujan} which adds $4$ decimal digits a term.

\subsubsection{The case $N=37$}

\noindent \par Let us recall that $G_{37}^4 = u_{37} = 6 + \sqrt{37}$. From Table~\ref{odd_n}, we get :
\begin{equation*}
y_{37} = \frac{2}{G_{37}^{12} - G_{37}^{-12}} = \frac{1}{882} \quad \text{,} \quad \frac{G_{37}^{12} + G_{37}^{-12}}{2} = 145 \sqrt{37} \quad \text{,} \quad \alpha(37) = \frac{\sqrt{37} - ( 171 - 25 \sqrt{37} ) G_{37}^{-2}}{2} \; \text{,}
\end{equation*}
as well as :
\begin{equation*}
k_{37}^2 = \frac{1}{2} \( 1 - \frac{1}{G_{37}^6} \sqrt{G_{37}^{12} - \frac{1}{G_{37}^{12}}} \) = \frac{1}{2} \( 1 - \frac{42}{G_{37}^6} \) \quad \Longrightarrow \quad \frac{k_{37}^2 G_{37}^{12}}{2} = \frac{G_{37}^6 ( G_{37}^6 - 42 )}{4} \; \text{.}
\end{equation*}
Consequently :
\begin{eqnarray*}
\frac{\alpha(37)}{y_{37} ( k_{37}'^2 - k_{37}^2 )} + \sqrt{37} \, \frac{k_{37}^2 G_{37}^{12}}{2} & = & \frac{21}{2} \[ \sqrt{37} - ( 171 - 25 \sqrt{37} ) G_{37}^{-2} \] G_{37}^6 + \sqrt{37} \, \frac{( G_{37}^6 - 42 ) G_{37}^6}{4} \\
& = & \frac{G_{37}^4}{4} \[ -42 ( 171 - 25 \sqrt{37} ) + \sqrt{37} \, G_{37}^8 \] \\
& = & \frac{6 + \sqrt{37}}{4} \[ -42 ( 171 - 25 \sqrt{37} ) + \sqrt{37} ( 6 + \sqrt{37} )^2 \] \eqn{=} \frac{1123}{4} \; \text{.}
\end{eqnarray*}
Putting these numerical values into~(\ref{yn}) yields :
\begin{eqnarray}
\frac{1}{\pi} & = & \sum_{n=0}^{+\infty} \frac{(-1)^n}{4^{4n}} \frac{(4n)!}{(n!)^4} \[ \frac{\alpha(37)}{y_{37} ( k_{37}'^2 - k_{37}^2 )} + \sqrt{37} \, \frac{k_{37}^2 G_{37}^{12}}{2} + n \sqrt{37} \, \frac{G_{37}^{12} + G_{37}^{-12}}{2} \] y_{37}^{2n+1} \nonumber \\
& = & \frac{1}{3528} \sum_{n=0}^{+\infty} (-1)^n \frac{(4n)!}{(n!)^4} \frac{1123 + 37 \times 580 n}{14112^{2n}}
\end{eqnarray}
which can be identified with Equation~(39) of~\cite{Ramanujan}.

\subsubsection{The case $N=58$} \label{rs_58_derivation}

\noindent \par Let $r \in \bb{Q}_+^*$, it turns out that :
\begin{equation}
\sideset{}{'} \sum_{m,n=-\infty}^{+\infty} \frac{(-1)^m}{m^2 + r n^2} = -\frac{\pi}{\sqrt{r}} \log (2 g_r^4) \; \text{.}
\end{equation}
Since this zeta sum over a $2$-dimensional lattice (with the exception of the origin) can be decomposed into a sum of products of $L$-series, we have :
\begin{equation*}
\sideset{}{'} \sum_{m,n=-\infty}^{+\infty} \frac{(-1)^{m+1}}{m^2 + 58 n^2} = \frac{\pi}{\sqrt{58}} \log 2 + \sum_{d \mid 29} \[ 1 - \( \frac{2}{d} \) \] L_{-\frac{232}{d}}(1) L_d(1) = \frac{\pi}{\sqrt{58}} \log 2 + 2 L_{-8}(1) L_{29}(1)
\end{equation*}
where $\( \dfrac{2}{d} \) $ denotes the Kronecker symbol of $2$ and $d>0$. Hence :
\begin{equation*}
\frac{\pi}{\sqrt{58}} \log (2 g_{58}^4) = \frac{\pi}{\sqrt{58}} (\log 2 + 2 \log u_{29}) \quad \Longrightarrow \quad g_{58}^2 = u_{29} = \frac{5 + \sqrt{29}}{2} \; \text{.}
\end{equation*}

\par From the relation :
\begin{equation}
\sideset{}{'} \sum_{m,n=-\infty}^{+\infty} \frac{(-1)^m}{m^2 + 2r n^2} - 4 \sideset{}{'} \sum_{m,n=-\infty}^{+\infty} \frac{(-1)^m}{m^2 + 8r n^2} = -\frac{\pi}{\sqrt{2r}} \log \( \frac{k_r}{4} \) \; \text{,}
\end{equation}
we may similarly deduce that $k_{58} = \dfrac{1}{u_2^6 u_{58}} = ( \sqrt{2} - 1 )^6 ( 13 \sqrt{58} - 99 )$. So $k_{58} + \dfrac{1}{k_{58}} = 198 \sqrt{2} ( 13 \sqrt{29} + 70 )$.

\par On inserting now the numerical values :
\begin{equation*}
x_{58} = \frac{2}{g_{58}^{12} + g_{58}^{-12}} = \frac{1}{9801} \quad \text{,} \quad \frac{g_{58}^{12} - g_{58}^{-12}}{2} = 1820 \sqrt{29} \quad \text{,} \quad \alpha(58) = 3 g_{58}^6 k_{58} (33 \sqrt{29} - 148) \; \text{,}
\end{equation*}
and :
\begin{eqnarray*}
\frac{\alpha(58)}{x_{58} ( 1 + k_{58}^2 )} - \frac{\sqrt{58}}{4 g_{58}^{12}} & = & \frac{3 (33 \sqrt{29} - 148)}{x_{58} (k_{58} + k_{58}^{-1})} \( \frac{\sqrt{29} + 5}{2} \) ^3 - \frac{1}{2} \sqrt{\frac{29}{2}} \( \frac{\sqrt{29} - 5}{2} \) ^6 \\
& = & \frac{1}{2 \sqrt{2}} \[ 297 (33 \sqrt{29} - 148) - \sqrt{29} (9801 - 1820 \sqrt{29}) \] \eqn{=} 2 \sqrt{2} \times 1103
\end{eqnarray*}
into the series~(\ref{xn}), we find that :
\begin{eqnarray}
\frac{1}{\pi} & = & \sum_{n=0}^{+\infty} \frac{1}{4^{4n}} \frac{(4n)!}{(n!)^4} \[ \frac{\alpha(58)}{x_{58} ( 1 + k_{58}^2 )} - \frac{\sqrt{58}}{4 g_{58}^{12}} + n \sqrt{58} \, \frac{g_{58}^{12} - g_{58}^{-12}}{2} \] x_{58}^{2n+1} \nonumber \\
& = & \frac{2 \sqrt{2}}{9801} \sum_{n=0}^{+\infty} \frac{(4n)!}{(n!)^4} \frac{1103 + 29 \times 910 n}{396^{4n}} \; \text{.}
\end{eqnarray}
This concludes the proof of Equation~(44) in~\cite{Ramanujan}. As observed by Ramanujan himself, the series~(\ref{rs_58}) is extremely rapidly convergent by adding $8$ decimal digits a term !

\par As an exercise, the reader is encouraged to determine the other series of~\cite{Ramanujan} with the singular values in Tables~\ref{odd_n} and~\ref{even_n}. A solution is provided in the companion file \url{https://clwmypage.files.wordpress.com/2021/01/ramanujan-reciprocal-pi.pdf}.

\section{Conclusion}

\noindent \par Srinivasa Ramanujan recorded the bulk of his mathematical results in several notebooks of looseleaf paper and mostly written up without proofs. Hence, his works were often shrouded in a veil of divine magic and mystery. As being a deeply religious Hindu, he credited his substantial capacities to divinity, and stated that formulae were revealed to him by his family goddess, Namagiri Thayar. During the 20\textsuperscript{th} century, the many results in \emph{Ramanujan's Notebooks} inspired numerous papers by later mathematicians trying to prove what he had previously found.

\par As demonstrated, the general formula~(\ref{general_form}) produces multiple reciprocal series for $\pi$ in terms of the function $\alpha(r)$ and related modular quantities. Thus, we showed that the amazing sum~(\ref{rs_58}) is a specialization (when $N=58$) of~(\ref{general_form}) coupled with the invariant $\varphi(k) = \[ \dfrac{4k (1-k^2)}{(1+k^2)^2} \] ^2$.

\par For the sake of simplicity, we have intentionally skipped here some technical aspects, namely about modular equations of order~$p$ (with $p$ prime), modular forms, Eisenstein series, the Dedekind's $\eta$ function, etc. References~\cite{BBB2} and~\cite{BBC} (as well as multiple references therein) are accessible expository papers in connection with Ramanujan's series for $\dfrac{1}{\pi}$. For a deeper insight, material based on the context of elliptic and modular curves can be found e.g. in~\cite{BBB3}, \cite{CC} or~\cite{CG}.

\par This leads naturally to an other famous instance of Ramanujan-Sato series, to wit :
\begin{eqnarray} \label{chudnovsky}
\text{Chudnovsky (1988)} & & \frac{1}{\pi} = 12 \sum_{n=0}^{+\infty} (-1)^n \frac{(6n)!}{(3n)! (n!)^3} \frac{13591409 + 163 \times 3344418 n}{640320^{3 \( n+\frac{1}{2} \) }}
\end{eqnarray}
when $N=163$. On the quest for digits of $\pi$, the series~(\ref{chudnovsky}) was used by Alexander J. Yee and Shigeru Kondo to calculate more than $\unit{12,1.10^{12}}{}$ decimal places for a new record-breaking computation in 2013.

\par It was only recently that Heng Huat Chan and Shaun Cooper~\cite{CC} discovered a general approach that used the underlying modular congruence subgroup $\Gamma_0(N)$ to generate a set of all-new Ramanujan-Sato series, such as :
\begin{eqnarray*}
\text{Chan \& Cooper (2012)} & & \frac{1}{\pi} = 2 \sqrt{2} \sum_{n=0}^{+\infty} \[ \sum_{m=0}^n \frac{(-1)^{n-m}}{64^m} \frac{(4m)!}{(m!)^4} \binom{n+m}{n-m} \] \[ -24184 + 9801 \sqrt{29} \( n + \frac{1}{2} \) \] \( \frac{\sqrt{29} - 5}{2} \) ^{12 \( n + \frac{1}{2} \) }
\end{eqnarray*}
which can be considered as a counterpart of~(\ref{rs_58}).

\bigskip

\begin{thanks*}
The author would like to thank the anonymous referee for his / her constructive comments and suggestions which significantly improved the present manuscript.
\end{thanks*}

\vspace{12.0mm}

\end{document}